%
%
%


\documentclass{proc-l}

\usepackage{amssymb}

\usepackage{graphicx}



\newtheorem{theorem}{Theorem}[section]
\newtheorem{lemma}[theorem]{Lemma}

\theoremstyle{definition}

\theoremstyle{remark}

\numberwithin{equation}{section}

\begin{document}

\title[$n$-th derivative and fractional integration of Bessel functions]{On the $n$-th derivative and the fractional integration of Bessel
functions with respect to the order}


\author{J. L. Gonz\'{a}lez-Santander}
\address{C/ Ovidi Montllor i Mengual 7, pta. 9. \and 46017, Valencia, Spain.}
\email{juanluis.gonzalezsantander@gmail.com}

\subjclass[2010]{Primary 33C10, 26A33}

\date{}


\commby{}

\begin{abstract}
We obtain integral representations of the $n$-th derivatives of the Bessel
functions with respect to the order. The numerical evaluation of these
expressions is very efficient using a double exponential integration
strategy. Also, from the integral representation corresponding to the
Macdonald function, we have calculated a new integral. Finally, we calculate
integral expressions for the fractional derivatives of the Bessel functions
with respect to the order. Simple proofs for some particular cases given in
the literature are provided as well.
\end{abstract}

\maketitle


\section{Introduction}

Bessel functions are the canonical solutions $y\left( t\right) $ of Bessel's
differential equation:%
\begin{equation}
t^{2}y^{\prime \prime }+t\,y^{\prime }+\left( t^{2}-\nu ^{2}\right) y=0,
\label{Eqn_Bessel}
\end{equation}%
where $\nu $ denotes the order of the Bessel function. This equation arises
when finding separable solutions of Laplace equation in cylindrical
coordinates, and Helmholtz equation in spherical coordinates \cite[Chap. 6]%
{Lebedev}. The general solution of (\ref{Eqn_Bessel})\ is a linear
combination of the \textit{Bessel functions of the first and second kind,
i.e. }$J_{\nu }\left( t\right) $ and $Y_{\nu }\left( t\right) $
respectively. These functions are usually defined as \cite[Eqns. 10.2.2\&3]%
{NIST}%
\begin{equation}
J_{\nu }\left( t\right) =\left( \frac{t}{2}\right) ^{\nu }\sum_{k=0}^{\infty
}\frac{\left( -1\right) ^{k}\left( t/2\right) ^{2k}}{k!\Gamma \left( \nu
+k+1\right) },  \label{Jnu_def}
\end{equation}%
and 
\begin{equation}
Y_{\nu }\left( t\right) =\frac{J_{\nu }\left( t\right) \cos \pi \nu -J_{-\nu
}\left( t\right) }{\sin \pi \nu },\qquad \nu \notin 
\mathbb{Z}
.  \label{Ynu_def}
\end{equation}

In the case of pure imaginary argument, the solutions to the Bessel
equations are called \textit{modified Bessel functions of the first and
second kind, }$I_{\nu }\left( t\right) $ and $K_{\nu }\left( t\right) $
respectively, where \cite[Eqns. 10.25.2\&27.4]{NIST}%
\begin{equation}
I_{\nu }\left( t\right) =\left( \frac{t}{2}\right) ^{\nu }\sum_{k=0}^{\infty
}\frac{\left( t/2\right) ^{2k}}{k!\Gamma \left( \nu +k+1\right) },
\label{Inu_def}
\end{equation}%
and%
\begin{equation}
K_{\nu }\left( t\right) =\frac{\pi }{2}\frac{I_{-\nu }\left( t\right)
-I_{\nu }\left( t\right) }{\sin \pi \nu },\qquad \nu \notin 
\mathbb{Z}
.  \label{Knu_def}
\end{equation}

Despite the fact, the properties of the Bessel functions has been studied
extensively in the literature \cite{Watson, Andrews}, studies about
successive derivatives and repeated integrals of the Bessel functions
with respect to the order $\nu $ are relatively scarce. In the literature,
we find the following series representations \cite[Eqns. 10.15.1\&38.1]{NIST}
for the derivative with respect to the order $\nu \notin 
\mathbb{Z}
$:%
\begin{equation}
\frac{\partial J_{\nu }\left( t\right) }{\partial \nu }=J_{\nu }\left(
t\right) \log \left( \frac{t}{2}\right) -\left( \frac{t}{2}\right) ^{\nu
}\sum_{k=0}^{\infty }\frac{\psi \left( \nu +k+1\right) \left( -1\right)
^{k}\left( t/2\right) ^{2k}}{k!\Gamma \left( \nu +k+1\right) },
\label{DJnu_series}
\end{equation}%
and%
\begin{equation}
\frac{\partial I_{\nu }\left( t\right) }{\partial \nu }=I_{\nu }\left(
t\right) \log \left( \frac{t}{2}\right) -\left( \frac{t}{2}\right) ^{\nu
}\sum_{k=0}^{\infty }\frac{\psi \left( \nu +k+1\right) \left( t/2\right)
^{2k}}{k!\Gamma \left( \nu +k+1\right) },  \label{DInu_series}
\end{equation}%
which are obtained directly from (\ref{Jnu_def})\ and (\ref{Inu_def}). For
the $n$-th derivative of the Bessel function of the first kind with respect
to the order, we find in \cite{Sesma}\ a more complex expression in series
form.

Regarding integral representations of the derivative of $J_{\nu }\left(
t\right) $ and $I_{\nu }\left( t\right) $ with respect to the order, we find
in \cite{Apelblat} $\forall \Re \nu >0$,%
\begin{equation}
\frac{\partial J_{\nu }\left( t\right) }{\partial \nu }=\pi \nu
\int_{0}^{\pi /2}\tan \theta \ Y_{0}\left( t\sin ^{2}\theta \right) J_{\nu
}\left( t\cos ^{2}\theta \right) d\theta ,  \label{DJnu_int_Apelblat}
\end{equation}%
and%
\begin{equation}
\frac{\partial I_{\nu }\left( t\right) }{\partial \nu }=-2\nu \int_{0}^{\pi
/2}\tan \theta \ K_{0}\left( t\sin ^{2}\theta \right) I_{\nu }\left( t\cos
^{2}\theta \right) d\theta .  \label{DInu_int_Apelblat}
\end{equation}

Other integral representations of the order derivative of $J_{\nu }\left(
z\right) $ and $Y_{\nu }\left( z\right) $ are given in \cite{Dunster} for $%
\nu >0$ and $t\neq 0$, $\left\vert \mathrm{arg}\ t\right\vert \leq \pi $,
which read as,%
\begin{equation}
\frac{\partial J_{\nu }\left( t\right) }{\partial \nu }=\pi \nu \left[
Y_{\nu }\left( t\right) \int_{0}^{t}\frac{J_{\nu }^{2}\left( z\right) }{t}%
dz+J_{\nu }\left( t\right) \int_{t}^{\infty }\frac{J_{\nu }\left( z\right)
Y_{\nu }\left( z\right) }{z}dz\right] ,  \label{DJnu_int_Dunster}
\end{equation}%
and%
\begin{eqnarray}
&&\frac{\partial Y_{\nu }\left( t\right) }{\partial \nu }
\label{DYnu_int_Dunster} \\
&=&\pi \nu \left[ J_{\nu }\left( t\right) \left( \int_{t}^{\infty }\frac{%
Y_{\nu }^{2}\left( z\right) }{z}dz-\frac{1}{2\nu }\right) -Y_{\nu }\left(
t\right) \int_{t}^{\infty }\frac{J_{\nu }\left( z\right) Y_{\nu }\left(
z\right) }{z}dz\right] .  \notag
\end{eqnarray}

Recently, in \cite{DBesselJL}, we find the following integral
representations of the derivatives of the modified Bessel functions $I_{\nu
}\left( t\right) $ and $K_{\nu }\left( t\right) $ with respect to the order
for $\nu >0$ and $t\neq 0$,$\left\vert \mathrm{arg}\ t\right\vert \leq \pi $,%
\begin{equation}
\frac{\partial I_{\nu }\left( t\right) }{\partial \nu }=-2\nu \left[ I_{\nu
}\left( t\right) \int_{t}^{\infty }\frac{K_{\nu }\left( z\right) I_{\nu
}\left( z\right) }{z}dz+K_{\nu }\left( t\right) \int_{0}^{t}\frac{I_{\nu
}^{2}\left( z\right) }{z}dz\right] ,  \label{DInu_int}
\end{equation}%
and%
\begin{equation}
\frac{\partial K_{\nu }\left( t\right) }{\partial \nu }=2\nu \left[ K_{\nu
}\left( t\right) \int_{t}^{\infty }\frac{I_{\nu }\left( z\right) K_{\nu
}\left( z\right) }{z}dz-I_{\nu }\left( t\right) \int_{t}^{\infty }\frac{%
K_{\nu }^{2}\left( z\right) }{z}dz\right] .  \label{DKnu_int}
\end{equation}

The great advantage of the integral expressions (\ref{DJnu_int_Dunster})-(%
\ref{DKnu_int})\ is that the integrals involved in them can be calculated in
closed-form \cite{DBesselJL}. Also, expressions in closed-form for the
second and third derivatives with respect to the order are found in \cite%
{BrychovNew}, but these expressions are extraordinarily complex, above all
for the third derivative.

Regarding integration of Bessel functions with respect to the order, the
results found in the literature are even more scarce. For instance, in \cite%
{Apelblat}, we find 
\begin{eqnarray}
\int_{\nu }^{\infty }J_{\mu }\left( t\right) d\mu &=&\frac{1}{2}+\frac{1}{%
\pi }\int_{0}^{\pi }\sin \left( t\sin x-\nu x\right) \frac{dx}{x}
\label{Appelblat_J} \\
&&-\frac{1}{\pi }\int_{0}^{\infty }\frac{e^{-t\sinh x-\nu x}}{\pi ^{2}+x^{2}}%
\left( x\sin \pi \nu +\cos \pi \nu \right) dx,  \notag
\end{eqnarray}%
and 
\begin{eqnarray}
\int_{\nu }^{\infty }I_{\mu }\left( t\right) d\mu &=&\frac{e^{t}}{2}-\frac{1%
}{\pi }\int_{0}^{\pi }e^{t\cos x}\sin \nu x\frac{dx}{x}  \label{Appelblat_I}
\\
&&-\frac{1}{\pi }\int_{0}^{\infty }\frac{e^{-t\cosh x-\nu x}}{\pi ^{2}+x^{2}}%
\left( x\sin \pi \nu +\cos \pi \nu \right) dx,  \notag
\end{eqnarray}%
which are calculated using the complex contour integration of a particular inverse Laplace
transform \cite[Sect. 88]{Churchill}. In the Appendix, we provide simples
proofs of (\ref{Appelblat_J}) and (\ref{Appelblat_I}) by direct integration.
Notice that a second integration with respect to the order in (\ref%
{Appelblat_J}) or (\ref{Appelblat_I}), taking again as integration interval $%
\left( \nu ,\infty \right) $, would be divergent. However, repeated
integration of Bessel functions with respect to the order is possible if we
take a finite integration interval. It is worth noting that the latter seems
to be absent in the most common literature.

Therefore, the goal of this article is two-folded. On the one hand, in
Section \ref{Section: Derivatives}, we obtain simple integral
representations for the $n$-th derivatives of the Bessel functions with
respect to the order. The great advantage of these expressions is that its
numerical evaluation is quite rapid and straightforward. As a by-product, we
obtain the calculation of an integral which does not seem to be reported in
the literature. Also, the values of the integral representations obtained at
argument $t=0$ are calculated.

On the other hand, in Section \ref{Section: Fractional Integration}, we
calculate the iterated integrals of Bessel functions, using fractional
integration. Also, values at argument $t=0$ are calculated. It is worth
noting that the latter is not trivial from the integral representations
obtained.

Finally, we collect our conclusions in Section \ref{Section: Conclusions}.

\section{Integral representations of $n$-th order derivatives\label{Section:
Derivatives}}

In order to perform the $n$-th derivatives of Bessel and modified Bessel
functions with respect to the order, first we state the following $n$-th
derivatives, that can be proved easily by induction.

\begin{lemma}
The $n$-th derivative of the functions 
\begin{eqnarray*}
f_{1}\left( \nu \right) &=&\cos \left( t\sin x-\nu x\right) , \\
f_{2}\left( \nu \right) &=&\sin \left( t\sin x-\nu x\right) , \\
f_{3}\left( \nu \right) &=&e^{-\nu x}\sin \pi \nu =\Im \left( e^{\left( i\pi
-x\right) \nu }\right) , \\
f_{4}\left( \nu \right) &=&e^{-\nu x}\cos \pi \nu =\Re \left( e^{\left( i\pi
-x\right) \nu }\right) ,
\end{eqnarray*}%
with respect to the order $\nu $ are given by%
\begin{eqnarray}
\ f_{1}^{\left( n\right) }\left( \nu \right) &=&x^{n}\cos \left( t\sin x-\nu
x-n\pi /2\right) ,  \label{f1(n)} \\
\ f_{2}^{\left( n\right) }\left( \nu \right) &=&x^{n}\sin \left( t\sin x-\nu
x-n\pi /2\right) ,  \label{f2(n)} \\
f_{3}^{\left( n\right) }\left( \nu \right) &=&e^{-\nu x}\Im \left[ \left(
i\pi -x\right) ^{n}e^{i\pi \nu }\right] ,  \label{f3(n)} \\
f_{4}^{\left( n\right) }\left( \nu \right) &=&e^{-\nu x}\Re \left[ \left(
i\pi -x\right) ^{n}e^{i\pi \nu }\right] .  \label{f4(n)}
\end{eqnarray}
\end{lemma}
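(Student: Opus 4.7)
The plan is to bypass an explicit induction by recognizing that all four functions are real or imaginary parts of elementary complex exponentials in $\nu$, so the $n$-th derivative follows from one application of $\frac{d^n}{d\nu^n}e^{a\nu} = a^n e^{a\nu}$.

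For $f_1$ and $f_2$ I would start from the observation that $f_1(\nu) + i f_2(\nu) = e^{i(t\sin x - \nu x)}$. Differentiating $n$ times in $\nu$ gives $(-ix)^n e^{i(t\sin x - \nu x)} = x^n e^{-in\pi/2} e^{i(t\sin x - \nu x)} = x^n e^{i(t\sin x - \nu x - n\pi/2)}$, where I have used $(-i)^n = e^{-in\pi/2}$ to absorb the scalar factor into the phase. Taking real and imaginary parts yields (\ref{f1(n)}) and (\ref{f2(n)}) simultaneously. As a sanity check on the sign convention, the case $n=1$ gives $\frac{d}{d\nu}\cos(t\sin x - \nu x) = x \sin(t\sin x - \nu x) = x \cos(t\sin x - \nu x - \pi/2)$, matching the formula.

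For $f_3$ and $f_4$ the identity is essentially built into the statement: both are written as $\Im$ and $\Re$ of $e^{(i\pi - x)\nu}$. Since $\frac{d^n}{d\nu^n} e^{(i\pi - x)\nu} = (i\pi - x)^n e^{(i\pi - x)\nu} = (i\pi - x)^n e^{-\nu x} e^{i\pi \nu}$, and taking the real or imaginary part commutes with derivatives in the real variable $\nu$ as well as with the factor $e^{-\nu x}$, the formulas (\ref{f3(n)}) and (\ref{f4(n)}) drop out immediately.

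There is really no hard step here: the whole argument hinges on writing each function as (the real or imaginary part of) a single complex exponential in $\nu$, after which differentiation is automatic. The only minor care needed is bookkeeping of the sign and phase factor $(-i)^n = e^{-in\pi/2}$ for $f_1, f_2$; if one prefers, the alternative is a direct induction on $n$, using $\cos(\theta - n\pi/2)\big|_{\theta = t\sin x - \nu x}$ and the elementary identity $\frac{d}{d\theta}\cos(\theta - n\pi/2) = -\sin(\theta - n\pi/2) = \cos(\theta - (n+1)\pi/2)$ combined with the chain-rule factor $-x$ from $\partial \theta/\partial \nu$, which reproduces the complex-exponential calculation step by step.
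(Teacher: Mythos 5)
Your main argument is correct, and it takes a different route from the paper: the paper simply asserts that the four formulas follow by induction on $n$, whereas you collapse everything into one application of $\frac{d^{n}}{d\nu^{n}}e^{a\nu}=a^{n}e^{a\nu}$ after packaging $f_{1}+if_{2}=e^{i(t\sin x-\nu x)}$ and $f_{3},f_{4}$ as $\Im,\Re$ of $e^{(i\pi-x)\nu}$. This buys a uniform, essentially computation-free proof of all four identities at once (and, since $x$ is real, the step $(-ix)^{n}=x^{n}e^{-in\pi/2}$ needs no restriction on the sign of $x$), while the paper's induction is more elementary but must be run separately, with the phase-shift bookkeeping done by hand; your complex form also matches how the paper itself later rewrites these integrands when evaluating the representations at $t=0$, so nothing is lost downstream. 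One small slip in your optional aside: the identity $-\sin\left(\theta-n\pi/2\right)=\cos\left(\theta-(n+1)\pi/2\right)$ is off by a sign (the correct statement is $\sin\left(\theta-n\pi/2\right)=\cos\left(\theta-(n+1)\pi/2\right)$); the induction still closes because the chain-rule factor $-x$ supplies the compensating sign, exactly as in your own $n=1$ check, but the intermediate equality as written should be corrected. This does not affect your primary complex-exponential proof, which is complete.
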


To obtain the $n$-th derivative of the Bessel function of the first kind
with respect to the order, we depart from the Schl\"{a}fli integral
representation of $J_{\nu }\left( t\right) $ \cite[Eqn. 10.9.6]{NIST},
wherein we have $\forall \Re t>0$, 
\begin{equation}
J_{\nu }\left( t\right) =\frac{1}{\pi }\int_{0}^{\pi }\cos \left( t\sin
x-\nu x\right) dx-\frac{\sin \nu \pi }{\pi }\int_{0}^{\infty }e^{-t\sinh
x-\nu x}dx.  \label{Jnu_Int}
\end{equation}

Therefore, applying (\ref{f1(n)})\ and (\ref{f3(n)}), the $n$-th derivative
of $J_{\nu }\left( t\right) $ with respect to the order is%
\begin{eqnarray}
\frac{\partial ^{n}}{\partial \nu ^{n}}J_{\nu }\left( t\right) &=&\frac{1}{%
\pi }\int_{0}^{\pi }x^{n}\cos \left( t\sin x-\nu x-\frac{\pi }{2}n\right) dx
\label{Dn_Jnu} \\
&&-\frac{1}{\pi }\int_{0}^{\infty }e^{-t\sinh x-\nu x}\Im \left[ \left( i\pi
-x\right) ^{n}e^{i\pi \nu }\right] dx.  \notag
\end{eqnarray}

Similar calculations can be carried out for the Bessel function of the
second kind, whose integral representation is \cite[Eqn. 10.9.7]{NIST}, $%
\forall \Re t>0$, 
\begin{eqnarray}
Y_{\nu }\left( t\right) &=&\frac{1}{\pi }\int_{0}^{\pi }\sin \left( t\sin
x-\nu x\right) dx  \label{Ynu_int} \\
&&-\frac{1}{\pi }\int_{0}^{\infty }e^{-t\sinh x}\left( e^{\nu x}+e^{-\nu
x}\cos \nu \pi \right) dx.  \notag
\end{eqnarray}

Therefore, according to (\ref{f2(n)})\ and (\ref{f4(n)}), we obtain 
\begin{eqnarray}
&&\frac{\partial ^{n}}{\partial \nu ^{n}}Y_{\nu }\left( t\right)
\label{Dn_Ynu} \\
&=&\frac{1}{\pi }\int_{0}^{\pi }x^{n}\sin \left( t\sin x-\nu x-\frac{\pi }{2}%
n\right) dx  \notag \\
&&-\frac{1}{\pi }\int_{0}^{\infty }e^{-t\sinh x}\left( x^{n}e^{\nu
x}-e^{-\nu x}\Re \left[ \left( i\pi -x\right) ^{n}e^{i\pi \nu }\right]
\right) dx.  \notag
\end{eqnarray}

For the modified Bessel function, we find in the literature the following
integral representation \cite[Eqn. 10.32.4]{NIST}, $\forall \Re t>0$, 
\begin{equation}
I_{\nu }\left( t\right) =\frac{1}{\pi }\int_{0}^{\pi }e^{t\cos x}\cos \nu x\
dx-\frac{\sin \nu \pi }{\pi }\int_{0}^{\infty }e^{-t\cosh x-\nu x}dx.
\label{Inu_int}
\end{equation}

Therefore, according to (\ref{f1(n)}) with $t=0$\ and (\ref{f3(n)}), we have%
\begin{eqnarray}
\frac{\partial ^{n}}{\partial \nu ^{n}}I_{\nu }\left( t\right) &=&\frac{1}{%
\pi }\int_{0}^{\pi }x^{n}e^{t\cos x}\cos \left( \nu x+\frac{\pi }{2}n\right)
dx  \label{Dn_Inu} \\
&&-\frac{1}{\pi }\int_{0}^{\infty }e^{-t\cosh x-\nu x}\Im \left[ \left( i\pi
-x\right) ^{n}e^{i\pi \nu }\right] dx.  \notag
\end{eqnarray}

Also, from the integral representation of the Macdonald function \cite[Eqn.
5.10.23]{Lebedev}, $\forall \Re t>0$, 
\begin{equation}
K_{\nu }\left( t\right) =\frac{1}{2}\int_{-\infty }^{\infty }e^{\nu x-t\cosh
x}\ dx,  \label{Knu_int}
\end{equation}%
we have%
\begin{equation}
\frac{\partial ^{n}}{\partial \nu ^{n}}K_{\nu }\left( t\right) =\frac{1}{2}%
\int_{-\infty }^{\infty }x^{n}e^{\nu x-t\cosh x}dx.  \label{Dn_Knu}
\end{equation}

For $n=1$, the above integral (\ref{Dn_Knu})\ is calculated in \cite%
{DBesselJL}\ in closed-form, thus $\forall \nu >0$, $\Re t>0$, we have%
\begin{eqnarray}
&&\int_{-\infty }^{\infty }x\,e^{\nu x-t\cosh x}dx  \label{int_new_G} \\
&=&\nu \left[ \frac{K_{\nu }\left( z\right) }{\sqrt{\pi }}%
G_{2,4}^{3,1}\left( z^{2}\left\vert 
\begin{array}{c}
1/2,1 \\ 
0,0,\nu ,-\nu%
\end{array}%
\right. \right) -\sqrt{\pi }I_{\nu }\left( z\right) G_{2,4}^{4,0}\left(
z^{2}\left\vert 
\begin{array}{c}
1/2,1 \\ 
0,0,\nu ,-\nu%
\end{array}%
\right. \right) \right] ,  \notag
\end{eqnarray}%
where $G_{p,q}^{m,n}$ denotes the Meijer-$G$ function \cite[%
Eqn. 16.17.1]{NIST}. If $\nu \notin 
\mathbb{Z}
$, the above expression is reduced in terms of generalized hypergeometric
functions $_{p}F_{q}$ \cite[Eqn. 16.2.1]{NIST} as \cite{BrychovNew},%
\begin{eqnarray}
&&\int_{-\infty }^{\infty }x\,e^{\nu x-t\cosh x}dx  \label{int_new_F} \\
&=&\pi \csc \pi \nu \left\{ \pi \cot \pi \nu \,I_{\nu }\left( z\right) - 
\left[ I_{\nu }\left( z\right) +I_{-\nu }\left( z\right) \right] 
\begin{array}{c}
\displaystyle
\\ 
\displaystyle%
\end{array}%
\right.  \notag \\
&&\left. \left[ \frac{z^{2}}{4\left( 1-\nu ^{2}\right) }\,_{3}F_{4}\left(
\left. 
\begin{array}{c}
1,1,\frac{3}{2} \\ 
2,2,2-\nu ,2+\nu%
\end{array}%
\right\vert z^{2}\right) +\log \left( \frac{z}{2}\right) -\psi \left( \nu
\right) -\frac{1}{2\nu }\right] \right\}  \notag \\
&&+\frac{1}{2}\left\{ I_{-\nu }\left( z\right) \Gamma ^{2}\left( -\nu
\right) \left( \frac{z}{2}\right) ^{2\nu }\,_{2}F_{3}\left( \left. 
\begin{array}{c}
\nu ,\frac{1}{2}+\nu \\ 
1+\nu ,1+\nu ,1+2\nu%
\end{array}%
\right\vert z^{2}\right) \right.  \notag \\
&&\quad -\left. I_{\nu }\left( z\right) \Gamma ^{2}\left( \nu \right) \left( 
\frac{z}{2}\right) ^{-2\nu }\,_{2}F_{3}\left( \left. 
\begin{array}{c}
-\nu ,\frac{1}{2}-\nu \\ 
1-\nu ,1-\nu ,1-2\nu%
\end{array}%
\right\vert z^{2}\right) \right\} .  \notag
\end{eqnarray}

It is worth noting that the numerical evaluation of the integral
representations for the $n$-th derivatives of the Bessel functions, i.e. (%
\ref{Dn_Jnu}), (\ref{Dn_Ynu}), (\ref{Dn_Inu}), and (\ref{Dn_Knu}), is very
efficient if we use a \textit{double exponential}\ strategy \cite{TakahasiMori}.
This is especially significant when $n\geq 3$. In order to have an idea of
how quick is the performance of the formulas presented here, define $t_{%
\text{int}}$ as the timing of the numerical evaluation of integral
representations (\ref{Dn_Jnu}), (\ref{Dn_Ynu}), (\ref{Dn_Inu}), and (\ref%
{Dn_Knu});\ and $t_{\text{num}}$ as the timing of the numerical method
provided by MATHEMATICA\ to compute derivatives, thus the timing ratio of both methods is 
$\chi =t_{\text{num}}/t_{\text{int}}$. For instance, Fig. \ref{Figure: derivada 3 J}
shows the plot of $\partial ^{3}/\partial \nu ^{3}J_{\nu }\left( t\right) $
in the domain $\left( \nu ,t\right) \in \left( 0,10\right) \times \left(
0,10\right) $, and the timing ratio in this case is $\chi =t_{\text{num}}/t_{%
\text{int}}\approx 35$. Similar ratios are obtained for the other $n$-th
derivatives integral expressions, and the higher is $n$, the higher is the
timing ratio $\chi $. 

\begin{figure}
\includegraphics{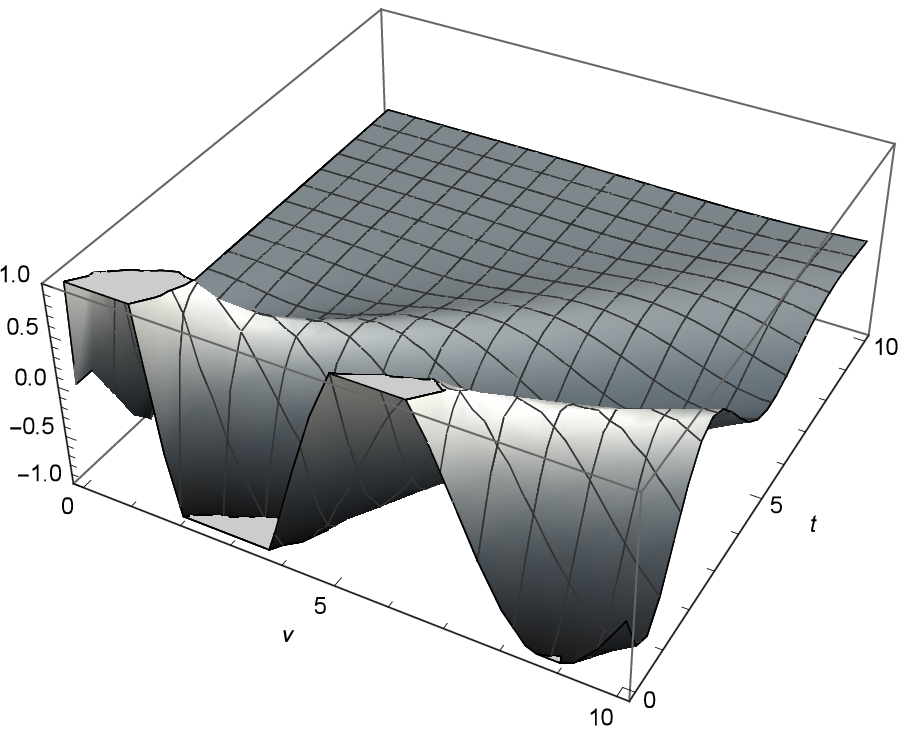}
\caption{Graph of $\frac{\partial ^{3}}{\partial 
\protect\nu ^{3}}J_{\protect\nu }\left( t\right) $ in the domain $\left( 
\protect\nu ,t\right) \in \left( 0,10\right) \times \left( 0,10\right) $.}
\label{Figure: derivada 3 J}
\end{figure}

\subsection{Values at $t=0$}

On the one hand, the derivatives of the irregular solutions of the Bessel's equation 
are infinite $\forall t=0$. Indeed, from (\ref{Dn_Ynu}), we have $%
\forall \nu \geq 0$, $n=0,1,\ldots $%
\begin{equation*}
\lim_{t\rightarrow 0^{+}}\frac{\partial ^{n}}{\partial \nu ^{n}}Y_{\nu
}\left( t\right) =-\infty ,
\end{equation*}%
and from (\ref{Dn_Knu}), 
\begin{equation*}
\lim_{t\rightarrow 0^{+}}\frac{\partial ^{n}}{\partial \nu ^{n}}K_{\nu
}\left( t\right) =\infty .
\end{equation*}

On the other hand, for the regular solutions, the derivatives with respect
to order $\forall t=0$ are null. Indeed, according to the series
representations (\ref{Jnu_def})\ and (\ref{Inu_def}), we have%
\begin{equation}
J_{\nu }\left( 0\right) =I_{\nu }\left( 0\right) =\left\{ 
\begin{array}{cc}
1, & \nu =0 \\ 
0, & \nu >0%
\end{array}%
\right.  \label{Jnu(0) = Inu(0) = 0}
\end{equation}

Therefore, $\forall \nu \geq 0$, $n=1,2,\ldots $ 
\begin{equation}
\frac{\partial ^{n}}{\partial \nu ^{n}}J_{\nu }\left( 0\right) =\frac{%
\partial ^{n}}{\partial \nu ^{n}}I_{\nu }\left( 0\right) =0.
\label{Dnu_J(0) = Dnu_I(0)}
\end{equation}

However, (\ref{Dnu_J(0) = Dnu_I(0)})\ is not obvious from the integral
representations (\ref{Jnu_Int}) and (\ref{Inu_int}). Next, we provide a
simple proof of (\ref{Dnu_J(0) = Dnu_I(0)}) from this point of view.

\bigskip

\begin{proof}
Rewrite (\ref{Dn_Jnu}) and (\ref{Dn_Inu})\ as%
\begin{eqnarray*}
\frac{\partial ^{n}}{\partial \nu ^{n}}J_{\nu }\left( t\right) &=&\frac{1}{%
\pi }\Re \int_{0}^{\pi }e^{it\sin x}\left( ix\right) ^{n}e^{i\nu x}dx \\
&&-\frac{1}{\pi }\Im \int_{0}^{\infty }e^{-t\sinh x}\left( i\pi -x\right)
^{n}e^{\left( i\pi -x\right) \nu }dx.
\end{eqnarray*}%
and%
\begin{eqnarray*}
\frac{\partial ^{n}}{\partial \nu ^{n}}I_{\nu }\left( t\right) &=&\frac{1}{%
\pi }\Re \int_{0}^{\pi }e^{t\cos x}\left( ix\right) ^{n}e^{i\nu x}dx \\
&&-\frac{1}{\pi }\Im \int_{0}^{\infty }e^{-t\cosh x}\left( i\pi -x\right)
^{n}e^{\left( i\pi -x\right) \nu }dx.
\end{eqnarray*}%
Therefore,%
\begin{eqnarray}
\frac{\partial ^{n}}{\partial \nu ^{n}}J_{\nu }\left( 0\right) &=&\frac{%
\partial ^{n}}{\partial \nu ^{n}}I_{\nu }\left( 0\right)  \notag \\
&=&\frac{1}{\pi }\Re \int_{0}^{\pi }\left( ix\right) ^{n}e^{i\nu x}dx  \notag
\\
&&-\frac{1}{\pi }\Im \int_{0}^{\infty }\left( i\pi -x\right) ^{n}e^{\left(
i\pi -x\right) \nu }dx.  \label{Im(iPi-x)}
\end{eqnarray}%
Note that, performing the substitution $z=i\pi -x$ in (\ref{Im(iPi-x)}) and
splitting the integration path in real and pure imaginary parts, we have $%
\forall \nu >0$%
\begin{eqnarray*}
\Im \int_{0}^{\infty }\left( i\pi -x\right) ^{n}e^{\left( i\pi -x\right) \nu
}dx &=&\Im \int_{-\infty }^{i\pi }z^{n}e^{z\nu }dz \\
&=&\Im \left[ \int_{-\infty }^{0}z^{n}e^{z\nu }dz+\int_{0}^{i\pi
}z^{n}e^{z\nu }dz\right] \\
&=&\Im \int_{0}^{i\pi }z^{n}e^{z\nu }dz.
\end{eqnarray*}%
Now, perform the substitution $z=ix$ and apply the property $\Im \left(
iz\right) =\Re \left( z\right) $ to arrive at%
\begin{equation}
\Im \int_{0}^{\infty }\left( i\pi -x\right) ^{n}e^{\left( i\pi -x\right) \nu
}dx=\Re \int_{0}^{\pi }\left( ix\right) ^{n}e^{i\nu x}dx.
\label{Im(iPi-x)_resultado}
\end{equation}%
Inserting (\ref{Im(iPi-x)_resultado})\ in (\ref{Im(iPi-x)}), the proof is
completed.\bigskip
\end{proof}

\section{Repeated integration with respect to the order\label{Section:
Fractional Integration}}

Here, we will adopt the following notation for the repeated integration, 
\begin{equation*}
D_{x-x_{0}}^{-n}f\left( x\right)
=\int_{x_{0}}^{x}\int_{x_{0}}^{t_{n-1}}\cdots \int_{x_{0}}^{t_{1}}f\left(
t_{0}\right) dt_{0}\cdots dt_{n-1}.
\end{equation*}

According to "Cauchy's iterated integral" \cite{Lavoie}, we have%
\begin{equation*}
D_{x-x_{0}}^{-n}f\left( x\right) =\frac{1}{\left( n-1\right) !}%
\int_{x_{0}}^{x}\left( x-t\right) ^{n-1}f\left( t\right) dt.
\end{equation*}

We can generalize the above result replacing $n$ by $\alpha $, 
\begin{equation}
D_{x-x_{0}}^{-\alpha }f\left( x\right) =\frac{1}{\Gamma \left( \alpha
\right) }\int_{x_{0}}^{x}\left( x-t\right) ^{\alpha -1}f\left( t\right)
dt,\qquad \Re \alpha >0.  \label{Int_frac_def}
\end{equation}

When $x_{0}=0$, (\ref{Int_frac_def})\ is called the Riemann-Liouville
integral. Thereby, $D_{x-x_{0}}^{-\alpha }$ denotes the fractional
integration of order $\alpha $. In order to perform the fractional
integration of the Bessel functions with respect to the order, we need the
following result.

\begin{lemma}
The fractional integration of the exponential function is given by%
\begin{equation}
D_{x-x_{0}}^{-\alpha }e^{sx}=\frac{e^{sx}}{s^{\alpha }}P\left( \alpha
,s\left( x-x_{0}\right) \right) ,  \label{Int_frac_Exp}
\end{equation}%
where 
\begin{equation}
P\left( a,z\right) =\frac{1}{\Gamma \left( \alpha \right) }%
\int_{0}^{z}t^{a-1}e^{-t}dt,\qquad \Re \alpha >0,  \label{gamma_low_def}
\end{equation}%
denotes the regularized lower incomplete gamma function \cite[Eqn. 8.2.4]%
{NIST}.
\end{lemma}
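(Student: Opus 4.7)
The plan is to substitute $f(x)=e^{sx}$ into the definition (\ref{Int_frac_def}) of the fractional integral, yielding
$$D_{x-x_{0}}^{-\alpha }e^{sx}=\frac{1}{\Gamma(\alpha)}\int_{x_{0}}^{x}(x-t)^{\alpha-1}e^{st}\,dt,$$
and then reduce the right-hand side to the regularized lower incomplete gamma function by a single linear change of variable.

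Concretely, I would introduce the substitution $u=s(x-t)$, so that $dt=-du/s$, $(x-t)^{\alpha-1}=s^{1-\alpha}u^{\alpha-1}$, and $e^{st}=e^{sx}e^{-u}$. The endpoints $t=x_0$ and $t=x$ map to $u=s(x-x_0)$ and $u=0$ respectively; after reversing the limits (which absorbs the minus sign from $dt$) and pulling out the constants $e^{sx}/s^{\alpha}$, one obtains
$$D_{x-x_{0}}^{-\alpha }e^{sx}=\frac{e^{sx}}{s^{\alpha}\,\Gamma(\alpha)}\int_{0}^{s(x-x_{0})}u^{\alpha-1}e^{-u}\,du,$$
and comparison with (\ref{gamma_low_def}) delivers the claimed identity.

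The only mildly delicate point concerns the contour of the resulting $u$-integral: when $s$ is not real, the path from $0$ to $s(x-x_0)$ is a ray in $\mathbb{C}$ rather than a real segment. Since $\Re \alpha>0$ makes $u^{\alpha-1}e^{-u}$ holomorphic on the cut plane and rapidly decaying in any right half-plane, a standard Cauchy/contour-deformation argument (or equivalently analytic continuation of both sides in $s$) shows that the ray integral agrees with the defining integral of $P(\alpha,s(x-x_0))$. Apart from this routine caveat, the argument is essentially a one-line substitution, so I do not foresee any serious obstacle; the content of the lemma is really the observation that the exponential is an eigenfunction-like object for $D_{x-x_0}^{-\alpha}$ up to an incomplete gamma factor.
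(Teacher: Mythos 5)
Your proposal is correct and coincides with the paper's own proof: both substitute $f(x)=e^{sx}$ into the definition and apply the linear change of variable $v=s(x-t)$ to recognize the regularized lower incomplete gamma function. Your extra remark on the complex-$s$ contour is a reasonable refinement but does not change the argument.
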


\begin{proof}
Taking in (\ref{Int_frac_def})\ $f\left( x\right) =e^{sx}$, and performing
the change of variables $v=s\left( x-t\right) $, we arrive at 
\begin{equation*}
D_{x-x_{0}}^{-\alpha }e^{sx}=\frac{e^{sx}}{\Gamma \left( \alpha \right)
s^{\alpha }}\int_{0}^{s\left( x-x_{0}\right) }v^{\alpha -1}e^{-v}dv,
\end{equation*}%
which, according to (\ref{gamma_low_def}), is the desired result given in (%
\ref{Int_frac_Exp}).
\end{proof}

\bigskip

According to the integral representation (\ref{Jnu_Int}), and taking into
account (\ref{Int_frac_Exp}), the fractional integration of the Bessel
function of the first kind with respect to the order is, $\forall t>0$,%
\begin{eqnarray}
&&D_{\nu -\nu _{0}}^{-\alpha }J_{\nu }\left( t\right)  \label{DJnu_a} \\
&=&\frac{1}{\pi \,}\left\{ \int_{0}^{\pi }x^{-\alpha }\Re \left( e^{i\left(
t\sin x-\nu x+\pi \alpha /2\right) }P\left( \alpha ,-ix\left( \nu -\nu
_{0}\right) \right) \right) dx\right.  \notag \\
&&-\left. \int_{0}^{\infty }e^{-t\sinh x}\Im \left( e^{\left( i\pi -x\right)
\nu }\frac{P\left( \alpha ,\left( i\pi -x\right) \left( \nu -\nu _{0}\right)
\right) }{\left( i\pi -x\right) ^{\alpha }}\right) dx\right\} .  \notag
\end{eqnarray}

Similarly, from (\ref{Inu_int}) and taking into account (\ref{Int_frac_Exp}%
), the fractional integration of the modified Bessel function is, $\forall
\Re t>0$, 
\begin{eqnarray}
&&D_{\nu -\nu _{0}}^{-\alpha }I_{\nu }\left( t\right)  \label{DInu_a} \\
&=&\frac{1}{\pi \,}\left\{ \int_{0}^{\pi }e^{t\cos x}x^{-\alpha }\Re \left(
e^{i\left( \nu x-\pi \alpha /2\right) }P\left( \alpha ,ix\left( \nu -\nu
_{0}\right) \right) \right) dx\right.  \notag \\
&&-\left. \int_{0}^{\infty }e^{-t\cosh x}\Im \left( e^{\left( i\pi -x\right)
\nu }\frac{P\left( \alpha ,\left( i\pi -x\right) \left( \nu -\nu _{0}\right)
\right) }{\left( i\pi -x\right) ^{\alpha }}\right) dx\right\} .  \notag
\end{eqnarray}

Also, from (\ref{Ynu_int}), $\forall t>0$, 
\begin{eqnarray}
&&D_{\nu -\nu _{0}}^{-\alpha }Y_{\nu }\left( t\right)  \label{DYnu_a} \\
&=&\frac{1}{\pi \,}\left\{ \int_{0}^{\pi }x^{-\alpha }\Im \left( e^{i\left(
t\sin x-\nu x+\pi \alpha /2\right) }P\left( \alpha ,-ix\left( \nu -\nu
_{0}\right) \right) \right) dx\right.  \notag \\
&&-\int_{0}^{\infty }e^{-t\sinh x}\left( e^{\nu x}\frac{P\left( \alpha
,x\left( \nu -\nu _{0}\right) \right) }{x^{\alpha }}\right.  \notag \\
&&\quad +\left. \left. \Re \left[ e^{\left( i\pi -x\right) \nu }\frac{%
P\left( \alpha ,\left( i\pi -x\right) \left( \nu -\nu _{0}\right) \right) }{%
\left( i\pi -x\right) ^{\alpha }}\right] \right) dx\right\} .  \notag
\end{eqnarray}

Finally, from the integral representation of the Macdonald function (\ref%
{Knu_int}), we obtain a very simple expression for the fractional
integration $\forall t>0$, 
\begin{equation}
D_{\nu -\nu _{0}}^{-\alpha }K_{\nu }\left( t\right) =\frac{1}{2}%
\int_{-\infty }^{\infty }x^{-\alpha }\Re \left[ e^{\nu x-t\cosh x}P\left(
\alpha ,x\left( \nu -\nu _{0}\right) \right) \right] dx,  \label{DKnu_a}
\end{equation}%
wherein we have taken the real part of the integral because, according to (%
\ref{Int_frac_def}), $D_{\nu -\nu _{0}}^{-\alpha }K_{\nu }\left( t\right) $
is real $\forall t>0$, thus the imaginary part of (\ref{DKnu_a}) must vanish.

It is worth noting that the finite integrals given in (\ref{DJnu_a})-(\ref%
{DYnu_a})\ are efficiently evaluated using the numerical \textit{double exponential} strategy \cite{TakahasiMori}.

\subsection{Values at $t=0$}

Note that from (\ref{Ynu_int}), we have that $\lim_{t\rightarrow
0^{+}}Y_{\nu }\left( t\right) =-\infty $, thus, according to (\ref%
{Int_frac_def}), we have 
\begin{equation*}
\lim_{t\rightarrow 0^{+}}D_{\nu -\nu _{0}}^{-\alpha }Y_{\nu }\left( t\right)
=-\infty .
\end{equation*}

Also, from (\ref{Knu_int}), we have that $\lim_{t\rightarrow 0^{+}}K_{\nu
}\left( t\right) =\infty $, thus 
\begin{equation*}
\lim_{t\rightarrow 0^{+}}D_{\nu -\nu _{0}}^{-\alpha }K_{\nu }\left( t\right)
=\infty .
\end{equation*}

Also, according to (\ref{Jnu(0) = Inu(0) = 0}) and (\ref{Int_frac_def}), we
have%
\begin{equation}
D_{\nu -\nu _{0}}^{-\alpha }J_{\nu }\left( 0\right) =D_{\nu -\nu
_{0}}^{-\alpha }I_{\nu }\left( 0\right) =0.  \label{DJ(0) = DI(0)}
\end{equation}

However, (\ref{DJ(0) = DI(0)}) is not obvious from the integral
representations given in (\ref{DJnu_a})\ and (\ref{DInu_a}). Next, we
provide a simple proof of the latter.

\begin{proof}
Substitute $t=0$ in (\ref{DJnu_a}) and (\ref{DInu_a}), and rewrite the
result as follows,%
\begin{eqnarray}
&&D_{\nu -\nu _{0}}^{-\alpha }J_{\nu }\left( 0\right) =D_{\nu -\nu
_{0}}^{-\alpha }I_{\nu }\left( 0\right)  \label{Int_frac_J0_1} \\
&=&\frac{1}{\pi \,}\left\{ \Re \int_{0}^{\pi }e^{\pm i\nu x}\frac{P\left(
\alpha ,\pm ix\left( \nu -\nu _{0}\right) \right) }{\left( \pm ix\right)
^{\alpha }}dx\right.  \notag \\
&&-\left. \Im \int_{0}^{\infty }e^{\left( i\pi -x\right) \nu }\frac{P\left(
\alpha ,\left( i\pi -x\right) \left( \nu -\nu _{0}\right) \right) }{\left(
i\pi -x\right) ^{\alpha }}dx\right\} .  \notag
\end{eqnarray}%
Notice that the first integral on the RHS\ of (\ref{Int_frac_J0_1}) is
independent of the `$\pm $' sign since $\forall a\in 
\mathbb{R}
$, $\overline{P\left( a,z\right) }=P\left( a,\bar{z}\right) $ and $\Re
\left( z\right) =\Re \left( \bar{z}\right) $. Perform the change of
variables $z=i\pi -x$ in the second integral on the RHS\ of (\ref%
{Int_frac_J0_1}) to obtain%
\begin{eqnarray}
&&\Im \int_{0}^{\infty }e^{\left( i\pi -x\right) \nu }\frac{P\left( \alpha
,\left( i\pi -x\right) \left( \nu -\nu _{0}\right) \right) }{\left( i\pi
-x\right) ^{\alpha }}dx  \notag \\
&=&\Im \int_{-\infty }^{i\pi }e^{z\nu }\frac{P\left( \alpha ,z\left( \nu
-\nu _{0}\right) \right) }{z^{\alpha }}dz  \notag \\
&=&\Im \int_{-\infty }^{0}e^{z\nu }\frac{P\left( \alpha ,z\left( \nu -\nu
_{0}\right) \right) }{z^{\alpha }}dz+\Im \int_{0}^{i\pi }e^{z\nu }\frac{%
P\left( \alpha ,z\left( \nu -\nu _{0}\right) \right) }{z^{\alpha }}dz.
\label{Int_Frac_J0_2}
\end{eqnarray}%
Note that the first integral in (\ref{Int_Frac_J0_2})\ vanishes. Perform the
substitution $z=ix$ in the second integral to arrive at%
\begin{equation*}
\Im \int_{0}^{\pi }ie^{i\nu x}\frac{P\left( \alpha ,ix\left( \nu -\nu
_{0}\right) \right) }{\left( ix\right) ^{\alpha }}dx.
\end{equation*}%
Taking into account that $\forall a\in 
\mathbb{R}
$, $\overline{P\left( a,z\right) }=P\left( a,\bar{z}\right) $, and the
property $\Im \left( i\bar{z}\right) =\Re \left( z\right) =\Re \left( \bar{z}%
\right) $, we finally get%
\begin{eqnarray}
&&\Im \int_{0}^{\infty }e^{\left( i\pi -x\right) \nu }\frac{P\left( \alpha
,\left( i\pi -x\right) \left( \nu -\nu _{0}\right) \right) }{\left( i\pi
-x\right) ^{\alpha }}dx  \label{Int_Frac_J0_3} \\
&=&\Re \int_{0}^{\pi }e^{\pm i\nu x}\frac{P\left( \alpha ,\pm ix\left( \nu
-\nu _{0}\right) \right) }{\left( \pm ix\right) ^{\alpha }}dx.  \notag
\end{eqnarray}%
Insert (\ref{Int_Frac_J0_3})\ in (\ref{Int_frac_J0_1})\ to complete the
proof.
\end{proof}

\bigskip

\section{Conclusions\label{Section: Conclusions}}

On the one hand, we have obtained integral expressions for the $n$-th
derivatives of the Bessel functions with respect to the order in (\ref%
{Dn_Jnu}), (\ref{Dn_Ynu}), (\ref{Dn_Inu}), and (\ref{Dn_Knu}). Numerically,
these integral expressions are much more efficient than the numerical
evaluation of the corresponding derivatives for $n\geq 3$ using a \textit{double
exponential}\ integration strategy. As a by-product, we have calculated the
integral given in (\ref{int_new_G}), which does not seem to be reported in
the literature. Also, since the null value of $\partial ^{n}/\partial \nu
^{n}J_{\nu }\left( 0\right) $ and $\partial ^{n}/\partial \nu ^{n}I_{\nu
}\left( 0\right) $ is not trivial from the integral representations
obtained, a simple proof is included.

On the other hand, we have calculated the fractional integration of the
Bessel functions with respect to the order in (\ref{DJnu_a})-(\ref{DKnu_a}).
Also, we have included a simple proof to calculate the value of $D_{\nu -\nu
_{0}}^{-\alpha }J_{\nu }\left( 0\right) $ and $D_{\nu -\nu _{0}}^{-\alpha
}I_{\nu }\left( 0\right) $ from the corresponding integral representations
obtained.

Finally, in the Appendix, we provide simple proofs of the infinite integrals
of $J_{\nu }\left( t\right) $ and $I_{\nu }\left( t\right) $ with respect to
the order, i.e. (\ref{Appelblat_J})\ and (\ref{Appelblat_I}). In the
literature, these integrals are derived calculating the complex contour
integral of a particular inverse Laplace transform, but here we have derived
them by direct integration.

\appendix

\section{Derivation of (\protect\ref{Appelblat_J})}

Integrating\ with respect to the order in (\ref{Jnu_Int})\ and exchanging
the order of integration, we have 
\begin{eqnarray}
\int_{\nu }^{\infty }J_{\mu }\left( t\right) d\mu &=&\frac{1}{\pi }%
\int_{0}^{\pi }\int_{\nu }^{\infty }\cos \left( t\sin x-\mu x\right) d\mu
\,dx  \label{Resultado_0} \\
&&-\frac{1}{\pi }\int_{0}^{\infty }e^{-t\sinh x}\int_{\nu }^{\infty }e^{-\mu
x}\sin \mu \pi \,d\mu \,dx.  \notag
\end{eqnarray}

Notice that%
\begin{eqnarray*}
\int_{\nu }^{\infty }\cos \left( z\sin x-\mu x\right) d\mu
&=&\lim_{b\rightarrow \infty }\Re \left[ e^{it\sin x}\int_{\nu }^{b}e^{-i\mu
x}d\mu \right] \\
&=&\frac{1}{x}\lim_{b\rightarrow \infty }\Re \left[ i\,\left. e^{i\left(
t\sin x-\mu x\right) }\right\vert _{\mu =\nu }^{b}\right] .
\end{eqnarray*}%
\qquad

Since $\Re \left( iz\right) =-\Im \left( z\right) $ and $\lim_{b\rightarrow
\infty }\sin \left( t\sin x-bx\right) =-\lim_{b\rightarrow \infty }\sin bx$,
we have%
\begin{eqnarray}
\int_{\nu }^{\infty }\cos \left( z\sin x-\mu x\right) d\mu &=&\frac{-1}{x}%
\lim_{b\rightarrow \infty }\Im \left[ \left. e^{i\left( t\sin x-\mu x\right)
}\right\vert _{\mu =\nu }^{b}\right]  \label{Resultado_1} \\
&=&\lim_{b\rightarrow \infty }\frac{\sin bx}{x}+\frac{\sin \left( t\sin
x-\nu x\right) }{x}.  \notag
\end{eqnarray}

Also, considering that $x>0$,%
\begin{eqnarray}
\int_{\nu }^{\infty }e^{-\mu x}\sin \mu \pi \,d\mu &=&\Im \int_{\nu
}^{\infty }e^{\left( i\pi -x\right) \mu }d\mu  \label{Resultado_2} \\
&=&-\Im \left[ \frac{e^{\left( i\pi -x\right) \nu }}{i\pi -x}\right]  \notag
\\
&=&\frac{e^{-\nu x}}{\pi ^{2}+x^{2}}\left( \pi \cos \pi \nu +x\sin \pi \nu
\right) .  \notag
\end{eqnarray}

Substituting in (\ref{Resultado_0})\ the results (\ref{Resultado_1}) and (%
\ref{Resultado_2}), we obtain%
\begin{eqnarray}
\int_{\nu }^{\infty }J_{\mu }\left( z\right) d\mu &=&\frac{1}{\pi }%
\lim_{b\rightarrow \infty }\int_{0}^{\pi }\frac{\sin bx}{x}dx+\frac{1}{\pi }%
\int_{0}^{\pi }\frac{\sin \left( t\sin x-\nu x\right) }{x}dx
\label{Pre_Resultado} \\
&&-\frac{1}{\pi }\int_{0}^{\infty }\frac{e^{-t\sinh z-\nu x}}{\pi ^{2}+x^{2}}%
\left( \pi \cos \pi \nu +x\sin \pi \nu \right) dx.  \notag
\end{eqnarray}

Considering the definition of the sine integral \cite[Eqn. 6.2.9]{NIST}, we
have 
\begin{equation}
\frac{1}{\pi }\lim_{b\rightarrow \infty }\int_{0}^{\pi }\frac{\sin bx}{x}%
dx=\lim_{b\rightarrow \infty }\frac{\mathrm{Si}\left( b\pi \right) }{\pi }=%
\frac{1}{2},  \label{Resultado_3}
\end{equation}%
where we have applied \cite[Eqn. 6.2.14]{NIST},%
\begin{equation}
\lim_{x\rightarrow \infty }\mathrm{Si}\left( x\right) =\frac{\pi }{2}.
\label{Si_Lim}
\end{equation}

Therefore, substituting (\ref{Resultado_3})\ in (\ref{Pre_Resultado}), we
obtain the integral representation given in (\ref{Appelblat_J}).

\section{Derivation of (\protect\ref{Appelblat_I})}

Integrating\ with respect to the order in (\ref{Inu_int})\ and exchanging
the order of integration, we have%
\begin{eqnarray}
\int_{\nu }^{\infty }I_{\mu }\left( t\right) d\mu &=&\frac{1}{\pi }%
\int_{0}^{\pi }e^{t\cos x}\int_{\nu }^{\infty }\cos \mu x\,d\mu \,dx
\label{Resultado_I0} \\
&&-\frac{1}{\pi }\int_{0}^{\infty }e^{-t\cosh x}\int_{\nu }^{\infty }\sin
\pi \mu \,e^{-\mu x}d\mu \,dx.  \notag
\end{eqnarray}

Note that%
\begin{equation}
\int_{\nu }^{\infty }\cos \mu x\,d\mu =\lim_{b\rightarrow \infty }\frac{\sin
bx}{x}-\frac{\sin \nu x}{x}.  \label{Resultado_a}
\end{equation}

Therefore, substituting in (\ref{Resultado_I0}) the results (\ref%
{Resultado_3})\ and (\ref{Resultado_a}), we obtain%
\begin{eqnarray}
&&\int_{\nu }^{\infty }I_{\mu }\left( t\right) d\mu  \label{Pre_resultado_I}
\\
&=&\frac{1}{\pi }\lim_{b\rightarrow \infty }\int_{0}^{\pi }e^{t\cos x}\frac{%
\sin bx}{x}\,dx-\frac{1}{\pi }\int_{0}^{\pi }e^{t\cos x}\frac{\sin \nu x}{x}%
dx  \notag \\
&&-\frac{1}{\pi }\int_{0}^{\infty }\frac{e^{-t\cosh x-\nu x}}{\pi ^{2}+x^{2}}%
\left( \pi \cos \pi \nu +x\sin \pi \nu \right) dx.  \notag
\end{eqnarray}

We can calculate the above limit performing the substitution $u=bx$, 
\begin{eqnarray}
&&\frac{1}{\pi }\lim_{b\rightarrow \infty }\int_{0}^{\pi }e^{t\cos x}\frac{%
\sin bx}{x}\,dx  \notag \\
&=&\frac{1}{\pi }\lim_{b\rightarrow \infty }\int_{0}^{b\pi }e^{t\cos \left(
u/b\right) }\frac{\sin u}{u}\,du  \notag \\
&=&\frac{e^{t}}{\pi }\lim_{b\rightarrow \infty }\int_{0}^{b\pi }\frac{\sin u%
}{u}\,du  \notag \\
&=&\frac{e^{t}}{\pi }\lim_{b\rightarrow \infty }\mathrm{Si}\left( b\pi \right)
=\frac{e^{t}}{2},  \label{Resultado_b}
\end{eqnarray}%
where we have used (\ref{Si_Lim}). Therefore, inserting in (\ref%
{Pre_resultado_I})\ the result (\ref{Resultado_b}), we arrive at (\ref%
{Appelblat_I}).

\bibliographystyle{amsplain}

\begin{thebibliography}{10}

\bibitem{Andrews} G.E. Andrews, R. Askey, R. Roy, \textit{Special functions:
Encyclopedia of Mathematics and its Applications}, vol. 71, Cambridge
University Press, NY, 2004.

\bibitem{Apelblat} A. Apelblat, N. Kravitsky, \textit{Integral representations of
derivatives and integrals with respect to the order of the Bessel functions $%
J_{\nu }(t)$, $I_{\nu }(t)$, the Anger function $\mathbf{J}_{\nu }(t)$, and
the integral Bessel function $Ji_{\nu }(t)$.} IMA J. Appl. Math. \textbf{34} (1985)
187--210.

\bibitem{Churchill} J.W. Brown, R.V. Churchill, \textit{Complex Variables and
Applications}, 8th edition, McGraw-Hill, NY, 2009.

\bibitem{BrychovNew} Y.A. Brychkov, \textit{Higher derivatives of the Bessel
functions with respect to the order.} Integr. Transf. Spec. Funct. \textbf{27} (2016)
566--577.

\bibitem{Dunster} T.M. Dunster, \textit{On the order derivatives of Bessel
functions.} Constr. Approx. \textbf{46} (2017) 47--68.

\bibitem{DBesselJL} J.L. Gonz\'{a}lez-Santander, \textit{Closed-form expressions
for derivatives of Bessel functions with respect to the order.} J. Math.
Anal. Appl. \textbf{466} (2018) 1060--1081. 

\bibitem{Lavoie} J.L. Lavoie, T.J. Osler, R. Tremblay, \textit{Fractional
Derivatives and Special Functions.} SIAM Rev. \textbf{18}(2) (1976) 240--268.

\bibitem{Lebedev} N.N. Lebedev, \textit{Special Functions and their Applications},
Prentice-Hall Inc, NJ, 1965.

\bibitem{NIST} F.W.J. Olver, D.W. Lozier, R.F. Boisvert, C.W. Clark, \textit{NIST
Handbook of Mathematical Functions}, Cambridge University Press, NY,
2010.

\bibitem{Sesma} J. Sesma, \textit{Derivatives with respect to the order of the Bessel
function of the first kind.} (2014) arXiv:1401.4850.

\bibitem{TakahasiMori} H. Takahasi, M. Mori, \textit{Double exponential
formulas for numerical integration.} Publ. Res. I. Math. Sci. \textbf{9}(3) (1974) 721--741.

\bibitem{Watson} G.N. Watson \textit{A Treatise on the theory of Bessel
functions}. Second Edition. Cambridge University Press, Cambridge, 2006.

\end{thebibliography}

\end{document}